\newcommand{\ds}{\displaystyle}
\newtheorem{defi}{Definition}[section]
\newtheorem{thm}[defi]{Theorem}
\newtheorem{cor}[defi]{Corollary}
\newtheorem{ex}[defi]{Example}
\newtheorem{remar}[defi]{Remark}
\renewenvironment{proof}{\noindent {\bf Proof:}}{\hfill{$\blacksquare$} \newline}
\newcommand{\mi}{\hspace{2.5em}}
\newcommand{\mhi}{\hspace{0.8em}}
\newcommand{\be}[1]{\begin{equation} \label{#1}}
\newcommand{\ee}{\end{equation}}
\newcommand{\ba}[1]{\begin{array}{#1}}
\newcommand{\ea}{\end{array}}
\newcommand{\bea}[1]{\begin{eqnarray} \label{#1}}
\newcommand{\eea}{\end{eqnarray}}
\newcommand{\bi}{\begin{itemize}}
\newcommand{\ei}{\end{itemize}}
\newcommand{\bd}{\begin{description}}
\newcommand{\ed}{\end{description}}
\newcommand{\ben}{\begin{enumerate}}
\newcommand{\een}{\end{enumerate}}
\newcommand{\bco}[1]{\begin{corollary} \label{#1}}
\newcommand{\eco}{\end{corollary}}
\title{A Test Matrix for an Inverse Eigenvalue Problem}
\author{G. M. L. Gladwell$^1$, T. H. Jones$^2$ and N. B. Willms$^2$}
\date{\today}
\renewcommand{\vec}[1]{{\bf #1}}
\begin{document}
	\maketitle
\mbox{\small \em $^1$Distinguished Professor Emeritus, Department of Civil and Environmental Engineering,} \linebreak
\mbox{\small \em \indent \ University of Waterloo, Ontario, N2L 3G1} \linebreak
\mbox{\small \em \indent $^2$Department of Mathematics, Bishop's University, Sherbrooke, Quebec, J1M 2H2} 

\hspace{2cm} \newline
\mbox{\small \indent Correspondence should be addressed to N. B. Willms; bwillms@ubishops.ca}

\hspace{2cm} \newline \indent
\mbox{\small Copyright \copyright 2014 G.\ M.\ L.\ Gladwell et.\ al.\ This is an open access article distributed under the} \linebreak
\mbox{\small \indent   Creative Commons Attribution License, which permits unrestricted use, distribution, and repro-} \linebreak
\mbox{\small \indent duction in any medium, provided the original work is properly cited.}
\begin{abstract}
		We present a real symmetric tri-diagonal matrix of order $n$ whose eigenvalues are $\{ 2k \}_{k=0}^{n-1}$ which also satisfies the additional condition that its leading principle submatrix has a uniformly interlaced spectrum, $\{ 2l + 1 \}_{l=0}^{n-2}$.  The matrix entries are explicit functions of the size $n$, and so the matrix can be used as a test matrix for eigenproblems, both forward and inverse.  An explicit solution of a spring-mass inverse problem incorporating the test matrix is provided.
	\end{abstract}
	
\section[] {Introduction}

We are motivated by the following inverse eigenvalue problem first studied by Hochstadt in 1967 \cite{hoch}. Given two strictly interlaced sequences of real values,
  $$ (\lambda_i )_1^n \mhi \mbox{and } \mhi (\lambda_i^o )_1^{n-1} ,$$
with 
 \be{1} 
\lambda_1 < \lambda_1^o < \lambda_2 < \lambda_2^o < \ \cdots \  < \lambda_{n-1} < \lambda_{n-1}^o < \lambda_n , 
 \ee
find the $n\times n$, real, symmetric, tridiagonal matrix, $B$, such that $\lambda(B) = (\lambda_i )_1^n$ are the eigenvalues of $B$, while $\lambda(B^o) = (\lambda_i^o )_1^{n-1}$ are the eigenvalues of the leading principal submatrix of $B$, that is, where $B^o$ is obtained from $B$ by deleting the last row and column. The condition on the data-set \eqref{1} is both necessary and sufficient for the existence of a unique Jacobian matrix solution to the problem (see \cite{ipv} \S 4.3  or \cite{nbw} \S 1.2 for a history of the problem, and \S 3 of this paper for additional background theory).

A number of different constructive procedures to produce the exact solution of this inverse problem have been developed \cite{bk, bg, bw, gb, hald, hoch2}, but none provide an explicit characterization of the entries of the solution matrix, $B$, in terms of the data-set \eqref{1}. Computer implementation of these procedures introduces floating point error and associated numerical stability issues. Loss of significant figures due to accumulation of round-off error makes some of the known solution procedures undesirable. Determining the extent of round-off in the numerical solution, $\hat{B}$, computed from a given data-set requires {\it a priori}\/ knowledge of the exact solution $B$. In the absence of this knowledge, an additional numerical computation of the forward problem to find the spectra, $\lambda ( \hat{B} )$ and $\lambda ( \hat{B}^o)$ allows comparison to the original data.

Test matrices, with known entries and known spectra, are therefore helpful in comparing the efficacy of the various solution algorithms in regards to stability. It is particularly helpful when said test matrices can be produced at arbitrary size. However some existant test matrices given as a function of matrix size $n$ suffer the following trait: when ordered by size, the minimum spacing between consecutive eigenvalues is a decreasing function of $n$. This trait is potentially undesirable since the reciprocal of this minimum separation between eigenvalues can be thought of as a condition number on the sensitivity of the eigenvectors (invariant subspaces) to perturbation (see \cite{mc}, Theorem 8.1.12). Some of the algorithms for the inverse problem seem to suffer from this form of ill-conditioning. From a motivation to avoid confounding the numerical stability issue with potential increased ill-conditioning of the data-set as a function of $n$, the authors developed a test matrix which has equally spaced, and uniformly interlaced, simple eigenvalues.

In Section \S 2 we provide the explicit entries of such a matrix, $A(n)$. We claim that its eigenvalues are equally spaced as 
$$ \lambda (A(n) ) = \{ 0,\ 2,\ 4,\ \ldots,\ 2n-2 \},$$
while its leading principal submatrix ${\ds A^o(n)}$ has eigenvalues uniformly interlaced with those of $A(n)$, namely:
$$ \lambda (A^o(n)) = \{ 1,\ 3,\ 5,\ \ldots,\ 2n-3 \} .$$
A short proof verifies the claims.  In Section \S 3 we present some background theory concerning Jacobian matrices, and in Section \S4 apply our test matrix to a model of a physical spring-mass system, an application which leads naturally to Jacobian matrices.

\section{Main Result}

	Let $A(n)$ be a real, symmetric, tridiagonal, $n \times n$ matrix with entries
	\begin{align*}
		a_{ii} &= n-1 & i=1,2,\dots,n\\
		a_{i,i+1} &= \frac{1}{2} \sqrt{i (2n - i - 1)} & i = 1, 2, \dots, n-2 \\
		a_{n-1,n} &= \sqrt{\frac{n(n-1)}{2}}
	\end{align*}
	and let $\ds A^o(n)$ be the principal submatrix of $A(n)$, that is, the $(n-1) \times (n-1)$ matrix obtained from $A(n)$ by deleting the last row and column.
	
	\begin{thm}
		$A(n)$ has eigenvalues $\{0, 2, \dots, 2n-2 \}$ and $\ds A^o(n)$ has eigenvalues $\{ 1, 3, \dots, 2n -3 \}$.
	\end{thm}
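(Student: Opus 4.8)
The plan is to recognize $A(n)$, after the shift $A(n)-(n-1)I$, as a symmetry block of the angular-momentum operator $J_x$ in the spin-$(n-1)$ irreducible representation of $\mathfrak{su}(2)$. Since $a_{ii}=n-1$ for every $i$, the matrix $C:=A(n)-(n-1)I$ has zero diagonal and the same off-diagonals, so the theorem is equivalent to showing that $C$ has spectrum $\{-(n-1),-(n-3),\dots,n-1\}$ (step $2$) and that $C^o:=A^o(n)-(n-1)I$ has spectrum $\{-(n-2),-(n-4),\dots,n-2\}$; adding $n-1$ then gives the stated even and odd integer spectra. Both target spectra are symmetric about $0$ with spacing $2$, which is the first hint of an angular-momentum structure.

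First I would set up the spin-$j$ representation with $j=n-1$, of dimension $2n-1$, on the basis $|m\rangle$, $m=-(n-1),\dots,n-1$, with $J_x=\tfrac12(J_++J_-)$. A direct computation gives $\langle m{+}1|J_x|m\rangle=\tfrac12\sqrt{(j-m)(j+m+1)}$, so in bottom-to-top order the $k$-th off-diagonal entry is $\tfrac12\sqrt{k(2n-1-k)}$ for $k=1,\dots,2n-2$. This sequence is palindromic under $k\mapsto 2n-1-k$, matching the pattern $\tfrac12\sqrt{i(2n-i-1)}$ of $A(n)$ under reversal of the index order. The eigenvalues of $J_x$ are the integers $-(n-1),\dots,n-1$.

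Next I would exploit the reflection $R\colon|m\rangle\mapsto|-m\rangle$. One checks $RJ_+R^{-1}=J_-$, hence $[R,J_x]=0$, so $J_x$ block-diagonalizes on its $R$-even and $R$-odd subspaces. Writing $e_0=|0\rangle$, $e_k=\tfrac1{\sqrt2}(|k\rangle+|-k\rangle)$ and $f_k=\tfrac1{\sqrt2}(|k\rangle-|-k\rangle)$, a short calculation shows $J_x$ is tridiagonal in each basis with the off-diagonals above, except that the even block acquires the anomalous first entry $\langle e_0|J_x|e_1\rangle=\tfrac1{\sqrt2}\sqrt{n(n-1)}=\sqrt{n(n-1)/2}$, the extra $\sqrt2$ arising precisely because $e_0=|0\rangle$ is not a $\tfrac1{\sqrt2}$-combination. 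Thus, after reversing the index order, the even block (dimension $n$) is exactly $C$ — reproducing the special entry $a_{n-1,n}=\sqrt{n(n-1)/2}$ — while the odd block (dimension $n-1$) is exactly $C^o$. Each block is a genuine invariant subspace, so its spectrum is a subset of $\{-(n-1),\dots,n-1\}$.

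The hard part will be deciding which integer eigenvalues of $J_x$ land in the even block and which in the odd block. I would settle this with the operator identity $e^{-i\pi J_x}=\lambda R$: since $e^{-i\pi J_x}$ and $R$ both conjugate $J_z$ to $-J_z$ and fix $J_x$, the product $R^{-1}e^{-i\pi J_x}$ commutes with the irreducible representation and is a scalar $\lambda=\pm1$ by Schur's lemma. On a $J_x$-eigenvector with eigenvalue $m'$ the left side acts by $(-1)^{m'}$, so that eigenvector has $R$-parity $\lambda(-1)^{m'}$. The sign of $\lambda$ is then pinned down by a dimension count: the choice $\lambda=(-1)^{n-1}$ gives parity $(-1)^{n-1+m'}$, placing the eigenvalues $m'\equiv n-1\pmod 2$, namely $\{-(n-1),\dots,n-1\}$ (step $2$), into the even block — exactly $n$ of them — which matches its dimension, whereas the complementary choice would give only $n-1$. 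The remaining eigenvalues $m'\equiv n\pmod 2$, namely $\{-(n-2),\dots,n-2\}$, fill the odd block. Adding $n-1$ back recovers $\lambda(A(n))=\{0,2,\dots,2n-2\}$ and $\lambda(A^o(n))=\{1,3,\dots,2n-3\}$. As an alternative avoiding representation theory, one could identify the characteristic polynomials of $C$ and $C^o$ with (symmetric) Krawtchouk polynomials through their three-term recurrence and invoke the known integer zero set, but the parity bookkeeping above seems the shortest honest route.
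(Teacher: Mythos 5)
Your argument is correct, but it takes a genuinely different route from the paper. The paper's proof is a direct induction on $n$ built from explicitly guessed matrices: an upper triangular $R$ realizing the similarity $\bigl(A^o(n+1)-nI\bigr)R=R\bigl(A(n)-(n-1)I\bigr)$ to handle the submatrix spectrum, and then a factorization $A(n+1)-2nI=-LL^T$ with $L$ lower bidiagonal and singular, so that the reversed product $2nI-L^TL$ splits off the eigenvalue $2n$ and leaves a block similar to $A(n)$. Your proof instead recognizes $A(n)-(n-1)I$ and $A^o(n)-(n-1)I$ as the reflection-even and reflection-odd blocks of $J_x$ in the spin-$(n-1)$ representation, and sorts the known integer spectrum $\{-(n-1),\dots,n-1\}$ between the two blocks via the Schur-lemma identity $e^{-i\pi J_x}=\lambda R$ together with a dimension count that forces $\lambda=(-1)^{n-1}$. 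I checked the two computations you leave as ``short calculations'': the off-diagonal entries of the even block in the basis $e_0,\dots,e_{n-1}$ come out to $\sqrt{n(n-1)/2}$ and $\tfrac12\sqrt{(n-1-k)(n+k)}$, which after reversing the index order reproduce $a_{n-1,n}$ and $a_{i,i+1}=\tfrac12\sqrt{i(2n-i-1)}$ exactly, and likewise for the odd block; the anomalous $\sqrt2$ at $e_0$ is indeed what produces the special last off-diagonal entry of $A(n)$. What each approach buys: the paper's induction is elementary and self-contained, but the matrices $R$, $L$, $S$ appear by fiat and the argument gives no hint of where $A(n)$ comes from; your argument explains the structure --- it ties $A(n)$ to the Kac--Sylvester matrix $K_{2n-2}$ mentioned in Section 3 (which is $2J_x$ in a non-symmetric normalization), it identifies the eigenvectors as the symmetric and antisymmetric $J_x$-eigenvectors (equivalently, symmetric Krawtchouk polynomials), and it makes the simultaneous appearance of the two interlaced spectra inevitable rather than coincidental. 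The cost is the representation-theoretic overhead (irreducibility, Schur's lemma, the exponential identity), which the paper's target audience may find heavier than the bare induction.
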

	
	\begin{proof}
		By induction.  When $n=2$
			$$A(2) = \begin{bmatrix} 1&1 \\ 1&1 \end{bmatrix}$$
		has eigenvalues $0, 2$, and $\ds A^o(2)$ has eigenvalue $1$.  Assume the result holds for $n$.  So $A(n)$ has eigenvalues $\{ 0, 2, \dots, 2n - 2 \}$.  Let $\ds B = A^o(n+1) - nI$ and $A = A(n) - (n-1)I$.  Then $B$ and $A$ are similar via $BR=RA$ where $R$ is upper triangular, with entries
			$$r_{ij} = \left\{ \begin{array}{ll} \sqrt{\frac{k(j-1)!(2n-j-1)!}{(i-1)!(2n-i+1)!}} & i,j \hbox{ have same parity and } j \geq i, \\
			0 & \hbox{otherwise,} \end{array} \right.$$
		and
			$$k = \left\{ \begin{array}{ll} 2 & j \not= n, \\
			1 & j=n. \end{array} \right.$$
		Therefore $\ds A^o(n+1)$ has eigenvalues $\{ 1, 3, \dots, 2n-1 \}$.
		
		Now we show that $A(n+1)$ has eigenvalues $\{2n \} \cup \{ \hbox{eigenvalues of } A(n) \}$.  Let $C = A(n+1) - 2nI$.  Factorize $C = - LL^T$, where $L$ is lower bidiagonal.  We find:
		$$
\ba{ l@{\hspace*{0.8cm}}l@{\hspace*{0.8cm}}l}
			l_{ii} = \sqrt{\frac{2n-i+1}{2}} ; &
			l_{i+1,i} = - \sqrt{\frac{i}{2}}, &  i = 1, 2, \dots, n-1, \\
			l_{nn} = \sqrt{\frac{n+1}{2}}; &
			l_{n+1,n} = -\sqrt{n}; &
			l_{n+1,n+1} = 0.
		\ea
$$
		Therefore $C$ has eigenvalue $0$ and thus $A(n+1)$ has eigenvalue $2n$.
		
		Define $D = 2nI - L^TL$, so
			$$D = \begin{bmatrix} {\ds D^o }& O \\ O & 2n \end{bmatrix}$$
		with
$$
\ba{ l@{\hspace*{0.8cm}}l@{\hspace*{0.8cm}}l @{\mbox{ and }} l}
			d_{ii} = \frac{2n-1}{2}; & d_{i+1,i} = \frac{1}{2} \sqrt{i(2n-i)}, & i = 1, 2, \dots, n-1, & \ d_{nn} = \frac{n-1}{2}. \ea
$$
		Now $\ds D^o$ has the same eigenvalues as $A(n)$ since they are similar matrices via $\ds SD^o = A(n)S$ where $S$ is upper triangular with entries
		\begin{align*}
			s_{ii} &= \sqrt{2n-i}; \mi s_{i,i+1} = -\sqrt{i} \mhi \  i = 1, 2, \dots, n-1, \\
				s_{nn} &= \sqrt{2n}; \mi \mi s_{ij} = 0 \mi \mbox{otherwise}.
		\end{align*}
		Therefore $A(n+1)$ has eigenvalues $\{2n \} \cup \{ \hbox{eigenvalues of } A(n) \}$.
	\end{proof}

\section{Discussion}
A real, symmetric $n \times n$ tridiagonal matrix $B$ is called a {\em Jacobian}\/ matrix when its off-diagonal elements are non-zero (\cite{ipv}, p. 46).
We write
 \be{2}
B = \left[ \ba{rrrccc} 
    a_1 & -b_1 & 0 & 0 & \cdots & 0 \\ 
    -b_1 & a_2 & -b_2 & 0  & \cdots & 0 \\
     0 & -b_2 & a_3 & -b_3 &  \ddots & \vdots \\
     0 & 0 & \ddots & \ddots & \ddots & 0 \\
      \vdots & \vdots & \ddots  & -b_{n-2} & a_{n-1} & - b_{n-1} \\
      0 & 0 & \ldots & 0 & -b_{n-1} & a_n \ea \right]
 \ee
The similarity transformation, $\hat{B} = S^{-1}BS$, where $S=S^{-1}$ is the alternating sign matrix, $S = \mbox{diag} (1, -1, 1, -1, \ldots, (-1)^{n-1}),$ produces a Jacobian matrix $\hat{B}$ with entries the same as $B$ except for the sign of the off-diagonal elements, which are all reversed. If  instead we use the self-inverse sign matrix, $S^{(m)} = \mbox{diag}(\underbrace{1,1,\ldots, 1}_{m},\overbrace{-1,-1,\ldots, -1}^{n-m}),$ to transform $B$, then $\hat{B}$ is a Jacobian matrix  identical to $B$ except for a switched sign on the $m^{\mbox{\scriptsize th}}$ off-digonal element. In regards to the spectrum of the matrix, there is therefore no loss of generality in accepting the convention that a Jacobian matrix be expressed with negative off-diagonal elements, that is, $b_i>0, \mhi \forall \ i= 1, \ldots, n-1$ in \eqref{2}.

While Cauchy's interlace theorem \cite{hwang} guarantees that the eigenvalues of any square, real, symmetric (or even Hermitian) matrix will interlace those of its leading (or trailing) principal submatrix, the interlacing cannot be strict, in general \cite{fisk}. However, specializing to the case of Jacobian matrices restricts the interlacing to strict inequalities. That is, Jacobian matrices possess distinct eigenvalues, and the eigenvalues of the leading (or trailing) principal submatrix are also distinct, and strictly interlace those of the original matrix (see \cite{ipv}, Theorems 3.1.3 and 3.1.4. See also \cite{mc} exercise P8.4.1, p. 475: when a tridiagonal matrix has algebraically multiple eigenvalues, the matrix fails to be Jacobian). The inverse problem is also well-posed: there is a unique (up to the signs of the off-diagonal elements) Jacobian matrix $B$ having given spectra specified as per \eqref{1} (see \cite{ipv}, Theorem 4.2.1, noting that the interlaced spectrum of $n-1$ eigenvalues $\ds (\lambda^o)_1^{n-1}$ can be used to calculate the last components of each of the $n$ ortho-normalized eigenvectors of $B$ via equation 4.3.31). Therefore, the matrix $A(n)$ in theorem 2.1 is the unique Jacobian matrix with eigenvalues equally spaced by two, starting with smallest eigenvalue zero, whose leading principal submatrix has eigenvalues also equally spaced by two, starting with smallest eigenvalue one.

As a consequence of the theorem, we now have
\begin{cor}
The eigenvalues of the real, symmetric $n\times n$ tridiagonal matrix
\be{3}
W_n= \left[ \ba{cccccc} 
    a & -c\sqrt{\frac{n-1}{2}} & 0 & 0 & \cdots & 0 \\ 
     -c\sqrt{\frac{n-1}{2}}  & a &   -c\sqrt{\frac{2n-3}{2}} & 0 & \cdots & 0 \\
     0 & -c\sqrt{\frac{2n-3}{2}} & a &  -c\sqrt{\frac{3n-6}{2}}  & \ddots & \vdots \\
     0 & 0 & \phantom{\sqrt{\frac{n(n-1)}{2}}}\hspace*{-15mm} \ddots & \ddots & \ddots & 0 \\
      \vdots  & \vdots & \ddots  & -c\sqrt{\frac{(n-2)(n+1)}{4}}  & a & -c\sqrt{\frac{n(n-1)}{2}} \\
      0 & 0 & \ldots &  0 & -c\sqrt{\frac{n(n-1)}{2}} & a \ea \right]
\ee
 form the arithmetic sequence, 
\be{4} \lambda (W_n) = \{ a_o + 2c (i-1) \}_{i=1}^{n}, \ee 
while the eigenvalues of its leading principal submatrix, $\ds W_n^{o}$, form the uniformally interlaced sequence 
\be{5}  \lambda (W_n^o ) = \{ a_o +c + 2c (i-1) \}_{i=1}^{n-1}, \mhi \mbox{where } \ a = a_o + c(n-1).  \ee
\end{cor}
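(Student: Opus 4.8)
The plan is to recognize $W_n$ as a simple affine image of the test matrix $A(n)$ from the theorem, so that both spectra in \eqref{4} and \eqref{5} fall out of the elementary spectral-mapping fact $\lambda(\alpha I - cM) = \{\alpha - c\mu : \mu \in \lambda(M)\}$. First I would compare the entries of $W_n$ in \eqref{3} against those of $A(n)$. The off-diagonal entry of $W_n$ in position $(i,i+1)$ is $-c\sqrt{\tfrac{i(2n-i-1)}{4}}$ for $i=1,\dots,n-2$ and $-c\sqrt{\tfrac{n(n-1)}{2}}$ for $i=n-1$, which is in every case precisely $-c$ times the corresponding off-diagonal entry $a_{i,i+1}$ of $A(n)$, while the diagonal of $A(n)$ is the constant $n-1$. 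Consequently $W_n + cA(n)$ has zero off-diagonal part and constant diagonal $a + c(n-1)$, so that
\[
 W_n = \bigl(a + c(n-1)\bigr)I - cA(n) = \bigl(a_o + 2c(n-1)\bigr)I - cA(n),
\]
where the second equality uses the hypothesis $a = a_o + c(n-1)$.

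Next I would observe that passing to a leading principal submatrix is an entrywise restriction that commutes with scalar multiplication and with adding a multiple of the identity, so the same relation descends one size: $W_n^o = \bigl(a_o + 2c(n-1)\bigr)I - cA^o(n)$. Here it matters that deleting the last row and column of $A(n)$ removes the anomalous entry $a_{n-1,n}$, leaving $A^o(n)$ exactly the matrix whose spectrum the theorem identifies; this is why the special last off-diagonal of \eqref{3} never enters the submatrix computation.

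It then remains only to apply the theorem and reindex. Since $\lambda(A(n)) = \{2(i-1)\}_{i=1}^{n}$, the spectrum of $W_n$ is $\{(a_o + 2c(n-1)) - 2c(i-1)\}_{i=1}^{n}$, and the substitution $i \mapsto n+1-i$ rewrites this set as $\{a_o + 2c(i-1)\}_{i=1}^{n}$, giving \eqref{4}. Likewise $\lambda(A^o(n)) = \{2i-1\}_{i=1}^{n-1}$ yields $\lambda(W_n^o) = \{(a_o + 2c(n-1)) - c(2i-1)\}_{i=1}^{n-1}$, and the substitution $i \mapsto n-i$ turns this into $\{a_o + c(2i-1)\}_{i=1}^{n-1} = \{a_o + c + 2c(i-1)\}_{i=1}^{n-1}$, which is \eqref{5}. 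The only genuine content is the entry-matching identity $W_n = (a_o + 2c(n-1))I - cA(n)$; once that is confirmed, the corollary is merely an affine relabelling of the spectra already supplied by the theorem, so I anticipate no real obstacle beyond verifying that each off-diagonal of \eqref{3}, including the final entry $-c\sqrt{\tfrac{n(n-1)}{2}}$, equals $-c\,a_{i,i+1}$.
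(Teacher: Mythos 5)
Your proposal is correct and is essentially the paper's own (implicit) argument: the paper offers no separate proof of the corollary beyond ``as a consequence of the theorem,'' the intended content being exactly the affine identity $W_n = \bigl(a_o + 2c(n-1)\bigr)I - cA(n)$ (equivalently, a shift and scaling of $A(n)$ up to the sign-similarity discussed just before the corollary). Your entry-matching, the observation that restriction to the leading principal submatrix respects this identity, and the reindexing of the resulting spectra are all accurate.
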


The form and properties of $W_n$ were first hypothesised by the third author while programming Fortran algorithms to reconstruct band matrices from spectral data \cite{nbw}. Initial attempts to prove the spectral properties of $W_n$ by both he and his graduate supervisor (the first author) failed. Later, the first author produced the short induction argument of Theorem (2.1), in July, 1996. Alas, the fax on which the argument was communicated to the third author was lost in a cross-border academic move, and so the matter languished until recently. In summer of 2013, the second and third authors, looking for a tractable problem for a summer undergraduate research project, assigned the problem of this paper: ``hypothesize, and then verify, if possible, the explicit entries of an $n\times n$ symmetric, tridiagonal matrix with eigenvalues \eqref{4}, such that the eigenvalues of its principal submatirx are \eqref{5}.'' Meanwhile a building renovation in summer of 2013 required the third author to clean out his office, including boxes of old papers and notes. During this process the misplaced fax from the first author was found.  However, to our delight, the student, Mr.\ Alexander De Serre-Rothney, was also able to independently complete both parts of the problem. His proof is now found in \cite{asr}. Though longer than the one presented here, his proof utilizes the spectral properties of another tridiagonal (non-symmetric) matrix, the so-called {\em Kac-Sylvester matrix},\/ $K_n$,  of size $(n+1) \times (n+1)\ $ \cite{ cle, edelman, muir, tt}:
 \be{6}
K_n = \left[ \ba{rrrccc} 
    n & n-1 & 0 & 0 & \cdots & 0 \\ 
    1 & n & n-2 & 0  & \cdots & 0 \\
     0 & 2 & n & n-3 &  \ddots & \vdots \\
     0 & 0 & \ddots & \ddots & \ddots & 0 \\
      \vdots & \vdots & \ddots  & n-2 & n & 1 \\
      0 & 0 & \ldots & 0 & n-1 & n \ea \right] \ ,
 \ee
which has eigenvalues $\lambda (K_n) = \{ 2k-n \}_{k=0}^n$.

\section{A Spring-Mass Model problem}
		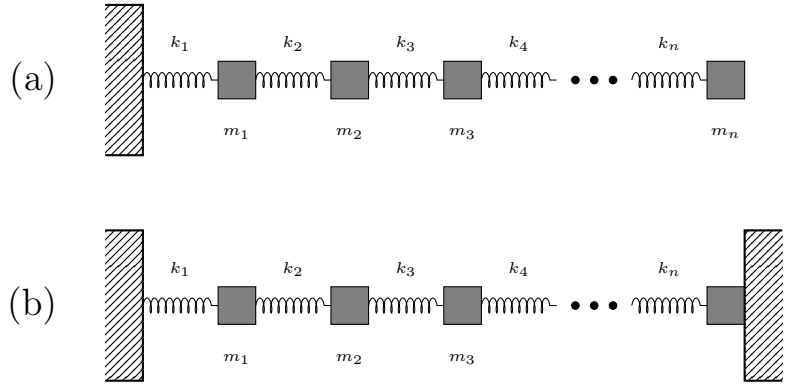
\begin{figure}[!h]
		\begin{center}
		\begin{tikzpicture}[font=\tiny]
		\draw (-1,3) node[left] {\large (a)};
		\draw (-1,0) node[left] {\large (b)};
		\foreach \i in {0,3}
			{
			\fill[pattern = north east lines] (-0.5,1+\i) rectangle (0,-1+\i);
			\draw[thick,black] (-0.5,1+\i) -- (0,1+\i) -- (0,-1+\i) -- (-0.5,-1+\i);
			\draw[decoration={aspect=0.3, segment length=1mm, amplitude=1mm,coil},decorate] (0,0+\i) -- (1,0+\i);
			\fill[fill=gray,draw=black] (1,0.25+\i) rectangle (1.5,-0.25+\i);
			\draw[decoration={aspect=0.3, segment length=1mm, amplitude=1mm,coil},decorate] (1.5,0+\i) -- (2.5,0+\i);
			\fill[fill=gray,draw=black] (2.5,0.25+\i) rectangle (3,-0.25+\i);
			\draw[decoration={aspect=0.3, segment length=1mm, amplitude=1mm,coil},decorate] (3,0+\i) -- (4,0+\i);
			\fill[fill=gray,draw=black] (4,0.25+\i) rectangle (4.5,-0.25+\i);
			\draw[decoration={aspect=0.3, segment length=1mm, amplitude=1mm,coil},decorate] (4.5,0+\i) -- (5.5,0+\i);
			\draw[fill=black] (5.75,0+\i) circle (0.5mm);
			\draw[fill=black] (6,0+\i) circle (0.5mm);
			\draw[fill=black] (6.25,0+\i) circle (0.5mm);
			\draw[decoration={aspect=0.3, segment length=1mm, amplitude=1mm,coil},decorate] (6.5,0+\i) -- (7.5,0+\i);
			\fill[fill=gray,draw=black] (7.5,0.25+\i) rectangle (8,-0.25+\i);
			
			\draw (0.5,0.25+\i) node[above] {$k_1$};
			\draw (2,0.25+\i) node[above] {$k_2$};
			\draw (3.5,0.25+\i) node[above] {$k_3$};
			\draw (5,0.25+\i) node[above] {$k_4$};
			\draw (7,0.25+\i) node[above] {$k_n$};
			\draw (1.25,-0.5+\i) node[below] {$m_1$};
			\draw (2.75,-0.5+\i) node[below] {$m_2$};
			\draw (4.25,-0.5+\i) node[below] {$m_3$};
			}
			\draw (7.75,2.5) node[below] {$m_n$};
			\fill[pattern = north east lines] (8,1) rectangle (8.5,-1);
			\draw[thick,black] (8.5,1) -- (8,1) -- (8,-1) -- (8.5,-1);
		\end{tikzpicture}
		\end{center}
		\caption{Spring-mass system: (a) right hand end free, (b) right hand end fixed} \label{fig:spring_mass}
		\end{figure}
		
		Let 
		$$C = \begin{bmatrix} k_1+k_2 & -k_2 \\ -k_2 & k_2+k_3 & -k_3 \\  & \cdot & \cdot & \cdot & \\ && -k_{n-1} & k_{n-1} + k_n & - k_n \\ && & -k_n & k_n
		\end{bmatrix} \hspace*{5mm} \hbox{ and } \hspace*{5mm} M = \begin{bmatrix} m_1 \\ &m_2 \\ &&\cdot\\ &&&m_{n-1}\\&&&&m_n \end{bmatrix}$$
		
		Then the natural frequencies of the systems in Figure \ref{fig:spring_mass} are solutions of $(C-\lambda M) \vec{x} = \vec{0}$ and $(C^o - \lambda^o M^o)\vec{x}^o = \vec{0}$, where $C^o$ is obtained from $C$ by deleting the last row and column.  The solutions can be ordered $0 < \lambda_1 < \lambda^0_1 < \lambda_2 < \cdots < \lambda_{n-1} < \lambda^o_{n-1} < \lambda_n$.  We can also rewrite the systems as $(B - \lambda I) \vec{u} = \vec{0}$ and $(B^o-\lambda^o I)\vec{u}^o = \vec{0}$ where $B = M^{-1/2}CM^{-1/2}$ and $\vec{u} = M^{1/2}\vec{x}$.  Note the natural frequencies of the systems are the eigenvalues of $B$ and $B^o$.
		
		We will consider the system with natural frequencies $\{1, 3, \dots, 2n-1 \}$ for system (a), and $\{2, 4, \dots, 2n-2\}$ for system (b).  We can use the matrix $A(n)$ to help solve this system for the spring stiffnesses and the masses.
		
		Let $B(n)$ be the symmetric tridiagonal matrix:
			\begin{align*}
				B_{ii} &= a_i = n & i=1,\dots,n \\
				B_{i,i+1} &= -b_i = -\frac{1}{2} \sqrt{i(2n-i-1)} & i=1, \dots n-2 \\
				B_{n-1,n} &= -b_{n-1} = -\sqrt{\ds \frac{n(n-1)}{2}}
			\end{align*}
		Note that $B(n) = A(n) + I$ and has eigenvalues $\{ 2k + 1 \}_{k=0}^{n-1}$, while $B^o(n)$ has eigenvalues $\{ 2k \}_{k=1}^{n-1}$.
		
		Let $\vec{u} = \langle m_1^{1/2}, \cdots, m_n^{1/2} \rangle^T$ with $m_i > 0$ for all $i$.  Let $m = \sum_{i=1}^n m_i = \vec{u}^T \vec{u}$.  We wish to solve 
			\begin{align}
				B(n)\vec{u} &= \langle m_1^{-1/2} k_i, 0, \dots, 0 \rangle^T \label{eq:Bu}
			\end{align}
		for $(m_i)_{i=1}^n$, and $k_1$.
		
		The bottom, $n^{th}$, equation is 
			$$m_{n-1}^{1/2} =\ds \frac{-n m_n^{1/2}}{-b_{n-1}} = \sqrt{2} \left( \frac{n}{n-1} \right)^{1/2} \alpha,$$
		where we choose $m_n^{1/2} = \alpha$.  We will thus be able to express $m_i^{1/2}$ in terms of the scaling parameter $\alpha$.
		
		The $(n-1)^{th}$ equation is
			$$m_{n-2}^{1/2} = \frac{\alpha b_{n-1} - n m_n^{1/2}}{-b_{n-2}}=\alpha \frac{\ds \sqrt{\frac{n(n-1)}{2}} - n \sqrt{2} \sqrt{\frac{n}{n-1}}}{-\frac{1}{2}\sqrt{(n-2)(n+1)}} = \alpha \sqrt{2} \left( \frac{n(n+1)}{(n-1)(n-2)} \right)^{1/2}.$$
			
		The $i^{th}$ equation, for $i \not= 1, n-1, n$, is
			$$-b_{i-1} m_{m-i}^{1/2} + n m_i^{1/2} - b_i m_{i+1}^{1/2} = 0.$$
		Then
			\begin{align}
				m_{i-1}^{1/2} &= \frac{2n m_i^{1/2} - (i(2n-i-1))^{1/2} m_{i+1}^{1/2}}{((i-1)(2n-i))^{1/2}} \label{eq:m_i}
			\end{align}
			
		Now suppose
			\begin{align}
				m_{n-i}^{1/2} = \alpha \sqrt{2} \left( \frac{n(n+1)\cdots(n+i-1)}{(n-1)(n-2) \cdots (n-i)} \right)^{1/2} \label{eq:m_n-i}
			\end{align}
		for $i = 1, 2, \dots, j$.  Then cases $i=1,2$ are already verified, and the strong inductive assumption applied in \eqref{eq:m_i} with $i-1=n-(j+1)$ implies $i=n-j$.  So
			\begin{align*}
				m_{n-j-1} &= \frac{2n \alpha \sqrt{2} \left( \frac{n(n+1)\cdots(n+j-1)}{(n-1)(n-2) \cdots (n-j)} \right)^{1/2}  - ((n-j)(n+j-1))^{1/2} m_{n-j+1}^{1/2}}{((n-j-1)(n+j))^{1/2}} \\
				&= \alpha \sqrt{2} \left( \frac{n(n+1)\cdots(n+j-1)}{(n-1)(n-2) \cdots (n-j)} \right)^{1/2} \left[ \frac{2n\left(\frac{n+j-1}{n-j}\right)^{1/2} - ((n-j)(n+j-1))^{1/2}}{((n-j-1)(n+j))^{1/2}} \right] \\
				&= \alpha \sqrt{2} \left( \frac{n(n+1)\cdots(n+j-1)}{(n-1)(n-2) \cdots (n-j)} \right)^{1/2} \left[ \frac{2n - (n-j)}{((n-j-1)(n+j))^{1/2}} \right] \\
				&= \alpha \sqrt{2} \left( \frac{n(n+1)\cdots(n+j-1)(n+j)}{(n-1)(n-2) \cdots (n-j)(n-j-1} \right)^{1/2}
			\end{align*}
		which verifies, by strong induction, the closed form for $m_{n-i}^{1/2}$ given by \eqref{eq:m_n-i}.
		
		Finally, the first equation of \eqref{eq:Bu} is
			$$nm_1^{1/2} - b_1 m_2^{1/2} = m_1^{-1/2}k_1$$
		and so
			$$k_1 = n m_1 - b_1(m_1 m_2)^{1/2}.$$
			
		We note that the values $m_{n-i}$ can be written as
			\be{7} 
m_{n-i} = 2\alpha^2 \frac{(n+i-1)!(n-i-1)!}{((n-1)!)^2}
\ee
		for $i=1, \dots, n-1$, and $m_n = \alpha^2 \ds \frac{(n+0-1)!(n-0-1)!}{((n-1)!)^2} = \alpha^2$.
		
		Since $C = M^{1/2}B(n)M^{1/2}$, then
	\be{8}
k_{i+1} = -C_{i,i+1} = -m_{n-(n-i)}^{1/2}B_{i,i+1} m_{n-(n-i-1)}^{1/2} = \alpha^2 \left( \frac{i!(2n-i-1)!}{((n-1)!)^2} \right)
\ee
		and $k_1 = \ds \alpha^2 \frac{(2n-1)!}{((n-1)!)^2}.$
\section{Conclusion}
A family of $n\times n$ symmetric tri-diagonal matrices, $W_n$,  whose eigenvalues are simple and uniformally spaced, and whose leading principle submatrix has uniformly interlaced, simple eigenvalues, has been presented \eqref{3}. Members of the family are characterized by a specified smallest eigenvalue, $a_o$ and gap size, $c$ between eigenvalues.
The matrices are termed  {\em Jacobian,}\/ since the off-diagonal entries are all non-zero.  The matrix entries are explicit functions of the size $n$, $a_o$ and $c$, so the matrices can be used as a test matrices for eigenproblems, both forward and inverse.  The matrix $W_n$ for specified smallest eigenvalue $a_o$ and gap $c$ is unique up to the signs of the off-diagonal elements. 

In Section \S 4, the form of $W_n$ was used as an explicit solution of a spring-mass vibration model (Figure 4.1), and the inverse problem to determine the lumped masses and spring stiffnesses was solved explicitely. Both the lumped masses, $m_{n-i}$ given by equation \eqref{7}, and spring stiffnesses, $k_{n-i}$ from equation \eqref{8} show super-exponential growth. Consequently $m_n/m_1, k_n/k_1$ become vanishingly small as $n\rightarrow \infty$. As a result, the spring-mass system of Figure 4.1 cannot be used as a discretized model for a physical beam in longitudinal vibration, as the model becomes unrealistic in the limit as $n \rightarrow \infty$. 

\bibliography{gladwell}

\begin{thebibliography}{10}

\bibitem{bk}
F.~W. Biegler-Konig.
\newblock Construction of band matrices from spectral data.
\newblock {\em Linear Algebra and Its Applications}, 40:79--87, 1981.

\bibitem{cle}
Paul~A. Clement.
\newblock A class of triple-diagonal matrices for test purposes.
\newblock {\em SIAM Review}, 1:50--52, 1959.

\bibitem{bg}
C.~de~Boor and G.~H. Golub.
\newblock The numerically stable reconstruction of a {J}acobi matrix from
  spectral data.
\newblock {\em Linear Algebra and Its Applications}, 21:245--260, 1978.

\bibitem{edelman}
Alan Edelman and Eric Kostlan.
\newblock The road from {K}ac's matrix to {K}ac's random polynomials.
\newblock In {\em Proceedings of the 1994 SIAM Applied Linear Algebra
  Conference}, pages 503--507, Philadelphia, Pennyslvania, 1994.

\bibitem{fisk}
Steve Fisk.
\newblock A very short proof of {C}auchy's interlace theorem for eigenvalues of
  {H}ermitian matrices.
\newblock {\em The American Mathematical Monthly}, 112(2):118, 2005.

\bibitem{ipv}
G.~M.~L. Gladwell.
\newblock {\em Inverse Problems in Vibration}.
\newblock Martinus Nijhoff Publishers, 1986.

\bibitem{bw}
G.~M.~L. Gladwell and N.~B. Willms.
\newblock A discrete {G}elʹfand-{L}evitan method for band-matrix inverse
  eigenvalue problems.
\newblock {\em Inverse Problems}, 5(2):165–179, 1989.

\bibitem{gb}
G.~H. Golub and D.~L. Boley.
\newblock Inverse eigenvalue problems for band matrices.
\newblock In G.~A. Watson, editor, {\em Numerical Analysis}, pages 23--31.
  Springer Verlag, 1977.

\bibitem{mc}
Gene~H. Golub and Charles F.~Van Loan.
\newblock {\em Matrix Computations}.
\newblock The John Hopkins University Press, 4th edition, 2013.

\bibitem{hald}
O.~H. Hald.
\newblock Inverse problems for {J}acobi matrices.
\newblock {\em Linear Algebra and Its Applications}, 14(4):63--85, 1976.

\bibitem{hoch}
H.~Hochstadt.
\newblock On some inverse problems in matrix theory.
\newblock {\em Arch. Math}, 18:201--207, 1967.

\bibitem{hoch2}
H.~Hochstadt.
\newblock On the construction of a {J}acobi matrix from spectral data.
\newblock {\em Linear Algebra and Its Applications}, 8:435--446, 1974.

\bibitem{hwang}
S.~G. Hwang.
\newblock {C}auchy's interlace theorem for eigenvalues of {H}ermitian matrices.
\newblock {\em The American Mathematical Monthly}, 111(4):157--159, 2004.

\bibitem{muir}
T.~Muir.
\newblock {\em A Treatise on the Theory of Determinants}.
\newblock Longmans Greens, 1933.
\newblock Revised and enlarged by W. H. Metzler, Dover, New York, 1960.

\bibitem{asr}
Alexander De~Serre Rothney.
\newblock Eigenvalues of a special tridiagonal matrix.
\newblock
  \url{http://www.ubishops.ca/fileadmin/bishops_documents/natural_sciences/mathematics/files/2013-De_Serre.pdf},
  2013.

\bibitem{tt}
O.~Taussky and J.~Todd.
\newblock Another look at a matrix of {M}ark {K}ac.
\newblock {\em Linear Algebra and Its Applications}, 150:341--360, 1991.

\bibitem{nbw}
N.~Brad Willms.
\newblock Some matrix inverse eigenvalue problems.
\newblock Master's thesis, University of Waterloo, Ontario, Canada, 1988.

\end{thebibliography}
\bibliographystyle{plain}

\end{document}